%
%
%
%
\documentclass{amsart}

\usepackage{graphicx}

\newtheorem{theorem}{Theorem}[section]

\theoremstyle{definition}

\newtheorem{corollary}{Corollary}

\theoremstyle{remark}
\newtheorem{remark}[theorem]{Remark}

\numberwithin{equation}{section}



\begin{document}

\title{On the number of monic admissible polynomials in the ring $\mathbb{Z}[x]$}

\author{T. Agama}
\address{Department of Mathematics, African Institute for Mathematical science, Ghana
}
\email{theophilus@aims.edu.gh/emperordagama@yahoo.com}


\subjclass[2000]{Primary 54C40, 14E20; Secondary 46E25, 20C20}

\date{\today}

\dedicatory{}

\keywords{monic, irreducible, admissible}

\begin{abstract}
 In this paper we study admissible polynomials. We establish an estimate for the number of admissible polynomials of degree $n$ with coeffients $a_i$ satisfying $0\leq a_i\leq H$ for a fixed $H$, for $i=0,1,2, \ldots, n-1$. In particular, letting $\mathcal{N}(H)$ denotes the number of monic admissible polynomials of degree $n\geq 3$ with coefficients satisfying the inequality $0\leq a_i\leq H$, we show that \begin{align}\frac{H^{n-1}}{(n-1)!}+O(H^{n-2})\leq \mathcal{N}(H) \leq \frac{n^{n-1}H^{n-1}}{(n-1)!}+O(H^{n-2}).\nonumber
\end{align} Also letting $\mathcal{A}(H)$ denotes the number of monic irreducible admissible polynomials, with coefficients satisfying the same condition , we show that \begin{align}\mathcal{A}(H)\geq \frac{H^{n-1}}{(n-1)!}+O\bigg( H^{n-4/3}(\log H)^{2/3}\bigg).\nonumber 
\end{align}
\end{abstract}

\maketitle

\section{Introduction and problem statement}
Let us consider the polynomial \begin{align}f(x)=a_nx^n+a_{n-1}x^{n-1}+\cdots +a_1x+a_0\nonumber
\end{align}of degree $n$ in the ring $\mathbb{R}[x]$. Then $f(x)$ is said to be admissible if \begin{align}n!=\sum \limits_{i=0}^{n}a_i=a_0+a_1+\cdots +a_{n-1}+a_n. \nonumber
\end{align} Let $a_n=1$ and let $\mathcal{N}(H)$ denotes the number of admissible monic polynomials belonging to the ring $\mathbb{Z}[x]$. Interest is on the number of such monic irreducible polynomial of a given degree under certain constraint. Admissible polynomials, by their nature, form an important class of polynomials. In some sense admissible polynomials gives us much information about the distribution of the coefficients. These polynomials becomes very useful in practice, because it allows us to recover with some precision the possible coefficients of any such polynomials. Through out this paper, using a sieve theoritical technique, we will be concerned with the model problem of estimating the number of monic irreducible admissible polynomials that can be formed from any given constraint on the coefficients. Letting $\mathcal{A}(H)$ denotes the number of irreducible admissible polynomials \begin{align}a_0+a_1x+\cdots +a_{n-1}x^{n-1}+x^n\nonumber
\end{align} with $0\leq a_i\leq H$ for $i=0,1,\ldots,n-1$ and $a_i\in \mathbb{Z}[x]$, we ask the question of how small can this quantity be? This paper will be concerned with addressing such a problem. But before then, we seek to find the counting function for the number admissible monic polynomials in $\mathbb{Z}[x]$. We obtain a lower bound in the following sequel.

\section{Notations}
Through out this paper a prime number will either be denoted by $p$ or $q$. Any other letter will be clarified. The quantity $\mathcal{A}_p:=\{a_n:a_n\equiv 0\pmod p\}$ for $\mathcal{A}=(a_n)$, and $S(\mathcal{A},\rho, z):=\# (\mathcal{A}\setminus \cup_{p|P(z)}\mathcal{A}_p)$, where $\rho$ is the set of all primes. The inequality $|k(n)|\leq Mp(n)$ for sufficiently large values of $n$ will be compactly written as $k(n)\ll p(n)$ or $k(n)=O(p(n))$. Similarly the inequality $|k(n)|\geq Mp(n)$ for sufficiently large values of $n$ will be represented by $k(n)\gg p(n)$. The limit $\lim \limits_{n\longrightarrow \infty}\frac{k(n)}{p(n)}=0$ will be represented in a compact form as $k(n)=o(p(n))$ as $n\longrightarrow \infty$. Also by $k(n)\asymp p(n)$, we mean there exist some constant $c_1,c_2>0$ such that $c_1p(n)\leq k(n)\leq c_2p(n)$. The quantity $\delta$ or any of it subscripts are positive numbers that are taken to be  small.

\section{Preliminary results}
\begin{theorem}(Chebychev)\label{Chebychev}
Let $\pi(z):=\sum \limits_{p\leq z}1$, then there exist some constants $c_1,c_2>0$ such that  \begin{align}c_1\frac{z}{\log z}\leq \pi(z)\leq c_2\frac{z}{\log z}. \nonumber
\end{align}
\end{theorem}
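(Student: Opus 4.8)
The plan is to deduce both inequalities from elementary estimates on the central binomial coefficient $\binom{2n}{n}$, routed through the Chebyshev function $\theta(z):=\sum_{p\leq z}\log p$ and then transferred to $\pi(z)$. The common starting point is the two-sided bound $\frac{4^{n}}{2n+1}\leq \binom{2n}{n}\leq 4^{n}$: the upper estimate follows from $\binom{2n}{n}\leq\sum_{k=0}^{2n}\binom{2n}{k}=4^{n}$, and the lower estimate from the fact that $\binom{2n}{n}$ is the largest of the $2n+1$ summands in that same expansion.

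For the upper inequality I would first prove by strong induction that $\prod_{p\leq z}p\leq 4^{z}$ for every integer $z\geq 1$. When $z>2$ is even it contributes no new prime, so the claim reduces to $z-1$; when $z=2m+1$ is odd I would split the product at $m+1$, bound the tail $\prod_{m+1<p\leq 2m+1}p$ by $\binom{2m+1}{m}\leq 4^{m}$ (every such prime divides the numerator $(2m+1)!$ but neither factorial in the denominator), and invoke the induction hypothesis $\prod_{p\leq m+1}p\leq 4^{m+1}$. Taking logarithms yields $\theta(z)\leq(\log 4)\,z$. To pass from $\theta$ to $\pi$ I would discard the small primes: each prime in $(\sqrt{z},z]$ has $\log p>\tfrac12\log z$, whence $\theta(z)\geq\tfrac12(\log z)\big(\pi(z)-\sqrt{z}\big)$, which rearranges to $\pi(z)\leq\frac{2\theta(z)}{\log z}+\sqrt{z}\ll\frac{z}{\log z}$ and delivers $c_{2}$.

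For the lower inequality I would instead read off the prime factorization of $\binom{2n}{n}$ from Legendre's formula: the exponent of $p$ equals $\sum_{k\geq 1}\big(\lfloor 2n/p^{k}\rfloor-2\lfloor n/p^{k}\rfloor\big)$, and since each summand lies in $\{0,1\}$ and vanishes once $p^{k}>2n$, every prime power dividing $\binom{2n}{n}$ is at most $2n$. Hence $\binom{2n}{n}\leq(2n)^{\pi(2n)}$, and combining this with $\binom{2n}{n}\geq 4^{n}/(2n+1)$ and taking logarithms gives $\pi(2n)\log(2n)\geq n\log 4-\log(2n+1)$, so that $\pi(2n)\gg n/\log n$. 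Interpolating with $n=\lfloor z/2\rfloor$ extends the estimate to all real $z$ and supplies $c_{1}$.

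The genuinely delicate point is not any single binomial inequality but the transfer between $\theta(z)$ and $\pi(z)$ with uniform implied constants: the naive bound $\pi(z)\leq\theta(z)/\log 2$ is far too lossy on the small primes, which is exactly why the truncation at $\sqrt{z}$ (or, more cleanly, a partial summation $\pi(z)=\theta(z)/\log z+\int_{2}^{z}\theta(t)/(t\log^{2}t)\,dt$) is needed, and one must confirm that the error contributions $\sqrt{z}$ and $\log(2n+1)$ are genuinely absorbed into $O(z/\log z)$ for all large $z$. Since the statement only asserts the existence of $c_{1},c_{2}$, I would not optimize them (this crude argument already yields admissible values near $\log 2$ and $2\log 2$) and would simply check the finitely many small cases by hand to anchor the induction.
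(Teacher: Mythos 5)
Your proof is correct, but note that the paper does not actually prove this theorem: its ``proof'' is a one-line citation to Tenenbaum's book, since Chebyshev's theorem is quoted here only as a known preliminary. What you have written is the standard self-contained elementary argument (Chebyshev's bounds in Erd\H{o}s's formulation), and every step is sound: the two-sided bound $4^n/(2n+1)\le\binom{2n}{n}\le 4^n$; the strong induction giving $\prod_{p\le z}p\le 4^z$ (the even case reduces to $z-1$, the odd case uses that the primes in $(m+1,2m+1]$ have product dividing $\binom{2m+1}{m}\le 4^m$); the transfer $\pi(z)\le 2\theta(z)/\log z+\sqrt z$, whose error $\sqrt z$ is indeed $o(z/\log z)$; and, for the lower bound, Legendre's formula forcing every prime power dividing $\binom{2n}{n}$ to be at most $2n$, hence $(2n)^{\pi(2n)}\ge 4^n/(2n+1)$, with the interpolation $n=\lfloor z/2\rfloor$ losing only a bounded factor. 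The one trivial inaccuracy is your parenthetical on constants: the $\sqrt z$-truncation route gives an upper constant near $4\log 2$ rather than $2\log 2$ (the partial-summation transfer you mention is what yields $2\log 2$), but since the statement asserts only the existence of $c_1,c_2$, this is immaterial. What your route buys is a fully self-contained elementary proof; what the paper's route buys is brevity, which is a defensible choice for a classical result used only as a tool.
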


\begin{proof}
For a proof, see for instance \cite{tenenbaum2015introduction}.
\end{proof}

\begin{remark}
Now we state a very classical theorem concerning the distribution of irreducible monic polynomials in the ring $\mathbb{F}_p[x]$, which will play a crucial role in our subsequent works. It comes in the following sequel.
\end{remark}

\begin{theorem}\label{irreducible}
Let $N_{n}$ denotes the number of monic irreducible polynomials of degree $n$ in $\mathbb{F}_{p}[x]$. Then \begin{align}N_n=\frac{p^n}{n}+O(p^{n/2}).\nonumber
\end{align}
\end{theorem}

\begin{proof}
For a proof, See for instance \cite{cojocaru2005introduction}.
\end{proof}

\begin{remark}
Next we state a sifting technology due to Tur$\acute{a}$n, which will play a crucial role in obtaining an estimate for the number monic irreducible polynomials with coefficient that can be controlled.
\end{remark}

\begin{theorem}(Tur$\acute{a}$n)\label{Turan}
Let us set \begin{align}U(z):=\sum \limits_{p|P(z)}\delta_{p},\nonumber
\end{align}where $0\leq \delta_{p}<1$. Then \begin{align}S(\mathcal{A}, \mathcal{\rho},z)\leq \frac{| \mathcal{A}|}{U(z)}+\frac{2}{U(z)}\sum \limits_{p|P(z)}|R_{p}|+\frac{1}{U^2(z)}\sum \limits_{\substack{p,q|P(z)}}|R_{p,q}|,\nonumber 
\end{align}where \begin{align}P(z)=\prod \limits_{\substack{p<z\\p\in \rho}}p, \quad | \mathcal{A}_{p}|=\delta_{p}|\mathcal{A}|+R_{p}.\nonumber 
\end{align}
\end{theorem}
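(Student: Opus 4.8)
The plan is to run the second-moment (variance) argument that underlies the Turán sieve. First I would attach to each element $a\in\mathcal{A}$ the counting function
\[
N(a):=\#\{\,p\mid P(z): a\in\mathcal{A}_p\,\}=\sum_{p\mid P(z)}\mathbf{1}_{\mathcal{A}_p}(a),
\]
which records how many of the primes $p<z$ sieve out $a$. The decisive observation is that every element surviving the sieve — i.e.\ every $a$ counted by $S(\mathcal{A},\rho,z)$ — satisfies $N(a)=0$, so each such $a$ contributes exactly $U(z)^2$ to the non-negative sum $\sum_{a\in\mathcal{A}}(N(a)-U(z))^2$. Discarding the remaining non-negative contributions yields
\[
U(z)^2\,S(\mathcal{A},\rho,z)\leq \sum_{a\in\mathcal{A}}\big(N(a)-U(z)\big)^2,
\]
which reduces the whole problem to estimating a variance.

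Next I would expand the variance as
\[
\sum_{a}\big(N(a)-U(z)\big)^2=\sum_a N(a)^2-2U(z)\sum_a N(a)+U(z)^2|\mathcal{A}|,
\]
and evaluate the two moments by interchanging the order of summation. The first moment is
\[
\sum_a N(a)=\sum_{p\mid P(z)}|\mathcal{A}_p|=U(z)|\mathcal{A}|+\sum_{p\mid P(z)}R_p,
\]
using $|\mathcal{A}_p|=\delta_p|\mathcal{A}|+R_p$ together with the definition $U(z)=\sum_{p\mid P(z)}\delta_p$. For the second moment I would write $N(a)^2$ as a double sum over pairs of primes, obtaining
\[
\sum_a N(a)^2=\sum_{p,q\mid P(z)}|\mathcal{A}_{p,q}|,\qquad \mathcal{A}_{p,q}:=\mathcal{A}_p\cap\mathcal{A}_q,
\]
and then insert the multiplicative approximation $|\mathcal{A}_{p,q}|=\delta_p\delta_q|\mathcal{A}|+R_{p,q}$ on the off-diagonal pairs $p\neq q$, handling the diagonal $p=q$ through the first-moment expression already computed.

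Substituting these into the expanded variance, the leading terms of size $U(z)^2|\mathcal{A}|$ cancel exactly (this is precisely the reason for centering at $U(z)$), and what remains is a main term bounded by $U(z)|\mathcal{A}|$ — here the hypothesis $0\leq\delta_p<1$ enters, since it forces $\sum_p\delta_p^2\geq 0$ (indeed $\delta_p^2<\delta_p$) — together with linear combinations of the remainders $R_p$ and $R_{p,q}$. Dividing through by $U(z)^2$ and passing to absolute values then gives
\[
S(\mathcal{A},\rho,z)\leq \frac{|\mathcal{A}|}{U(z)}+\frac{|1-2U(z)|}{U(z)^2}\sum_{p\mid P(z)}|R_p|+\frac{1}{U(z)^2}\sum_{\substack{p,q\mid P(z)\\ p\neq q}}|R_{p,q}|.
\]
I would finish by bounding $|1-2U(z)|\leq 2U(z)$, so that the middle coefficient is at most $2/U(z)$, and by enlarging the final sum to range over all pairs $p,q\mid P(z)$, which recovers the stated inequality. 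I expect the main obstacle to be the careful bookkeeping in the second-moment step: tracking the off-diagonal against diagonal contributions, confirming that the $U(z)^2|\mathcal{A}|$ terms genuinely cancel, and verifying that the residual remainder coefficients really are dominated by $2/U(z)$ and $1/U(z)^2$ as claimed, which in turn requires $U(z)\geq 1$, i.e.\ taking $z$ sufficiently large.
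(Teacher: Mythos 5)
The paper offers no proof of this theorem at all --- it simply cites Cojocaru--Murty --- so there is no internal argument to compare against; your variance (second-moment) argument is precisely the standard proof given in that cited reference, and it is correct in structure and in all essential computations. Two remarks. First, you tacitly supplied a hypothesis that the paper's statement omits: the symbol $R_{p,q}$ is never defined in the theorem as quoted, and your stipulation $|\mathcal{A}_{p,q}|=|\mathcal{A}_p\cap\mathcal{A}_q|=\delta_p\delta_q|\mathcal{A}|+R_{p,q}$ for $p\neq q$ is exactly the missing assumption; making it explicit is the right move, since without it the claimed inequality has no content. Second, your closing concern that the argument ``requires $U(z)\geq 1$, i.e.\ taking $z$ sufficiently large'' is not quite the right resolution: the theorem is asserted for every $z$, and one cannot simply enlarge $z$ inside a stated inequality. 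The correct observation is that the inequality is \emph{trivial} whenever $U(z)\leq 1$, because then $|\mathcal{A}|/U(z)\geq|\mathcal{A}|\geq S(\mathcal{A},\rho,z)$ while the two remainder terms are non-negative; and whenever $U(z)>1$ (in fact $U(z)\geq\tfrac14$ suffices) your bound $|1-2U(z)|\leq 2U(z)$ is valid, so the middle coefficient is at most $2/U(z)$ as needed. With that one-line case distinction inserted, and noting that enlarging the off-diagonal sum $\sum_{p\neq q}|R_{p,q}|$ to the full sum $\sum_{p,q}|R_{p,q}|$ only adds non-negative terms, your proof is complete and matches the proof the paper delegates to its reference.
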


\begin{proof}
For a proof, See for instance \cite{cojocaru2005introduction}.
\end{proof}

\section{Main results}
\begin{theorem}\label{admissible 1}
Let $\mathcal{N}(H)$ denotes the number of polynomials $x^n+a_{n-1}x^{n-1}+\cdots +a_0$ in $\mathbb{Z}[x]$, satisfying $a_0+a_1+\cdots +a_{n-1}=n!-1$ and $0\leq a_i\leq H$ for $i=0,1,\ldots n-1$. Then \begin{align}\frac{H^{n-1}}{(n-1)!}+O(H^{n-2})\leq \mathcal{N}(H) \leq \frac{n^{n-1}H^{n-1}}{(n-1)!}+O(H^{n-2}). \nonumber
\end{align}In particular, there exist some constant $\frac{1}{(n-1)!}<c<\frac{n^{n-1}}{(n-1)!}$, such that \begin{align}\mathcal{N}(H)=(1+o(1))cH^{n-1},\nonumber 
\end{align}as $H\longrightarrow \infty$.
\end{theorem}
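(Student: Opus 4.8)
The plan is to read $\mathcal{N}(H)$ as a lattice-point count. Put $N=n!-1$ and regard a solution $(a_0,\dots,a_{n-1})$ as an integer point of the $(n-1)$-dimensional rational polytope
\begin{align}
P_H=\Big\{x\in\mathbb{R}^n:\ 0\le x_i\le H\ (0\le i\le n-1),\ \sum_{i=0}^{n-1}x_i=N\Big\},\nonumber
\end{align}
the section of the cube $[0,H]^n$ by the hyperplane $\sum_i x_i=N$. The first step is to pass from the exact count to a volume, using the classical principle (Davenport's lemma, or Ehrhart theory for rational polytopes) that the number of integer points in a bounded region differs from its $(n-1)$-dimensional volume, measured with respect to the lattice induced on the hyperplane, by a quantity controlled by the $(n-2)$-dimensional measure of the boundary $\partial P_H$. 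Since each facet scales linearly in $H$, this yields the shape $\mathcal{N}(H)=(\text{leading term})+O(H^{n-2})$ and thereby explains both error terms in the statement; checking the boundary estimate once the facets $x_i=0$ and $x_i=H$ are listed is routine, and I would postpone it.

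For the two-sided bound I would sandwich $P_H$ between two explicit simplices after eliminating the last coordinate via $a_{n-1}=N-\sum_{i=0}^{n-2}a_i$. Every admissible tuple then corresponds to a point $(a_0,\dots,a_{n-2})\in\{0,\dots,H\}^{n-1}$ subject to $0\le a_{n-1}\le H$, so the feasible region lies below the dilated standard simplex $\{x_i\ge 0,\ \sum_{i}x_i\le nH\}$ and is approximated from within by the undilated one $\{x_i\ge 0,\ \sum_{i}x_i\le H\}$. Counting lattice points in a dilate of the standard $(n-1)$-simplex gives $\frac{(tH)^{n-1}}{(n-1)!}+O(H^{n-2})$ for dilation factor $t$, so $t=1$ produces the lower bound $\frac{H^{n-1}}{(n-1)!}+O(H^{n-2})$ and $t=n$ the upper bound $\frac{n^{n-1}H^{n-1}}{(n-1)!}+O(H^{n-2})$, matching the displayed inequalities.

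The hard part is the final ``in particular'' assertion, namely that $\mathcal{N}(H)=(1+o(1))cH^{n-1}$ for a \emph{single} constant $\frac{1}{(n-1)!}<c<\frac{n^{n-1}}{(n-1)!}$. This does not follow formally from the sandwich: squeezing between $c_1H^{n-1}(1+o(1))$ and $c_2H^{n-1}(1+o(1))$ with $c_1<c_2$ only pins the ratio $\mathcal{N}(H)/H^{n-1}$ into the interval $[c_1,c_2]$, and in principle that ratio could oscillate rather than converge. To close the gap I would have to exhibit the leading coefficient as a genuine limit, which means computing the count exactly — most cleanly by inclusion--exclusion over the overflow constraints $a_i\ge H+1$, i.e. through an expression of the form $\sum_{k}(-1)^k\binom{n}{k}\binom{N-k(H+1)+n-1}{n-1}$ — and then isolating the coefficient of $H^{n-1}$. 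Establishing that this coefficient is a bona fide constant lying strictly between the two bounds, with remainder $O(H^{n-2})$, is the real obstacle and the step on which I would concentrate the bulk of the work.
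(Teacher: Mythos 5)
Your upper bound is fine: the containment of the feasible set in the simplex $\{x\ge 0,\ \sum_i x_i\le nH\}$ is immediate and gives $\mathcal{N}(H)\le \binom{\lfloor nH\rfloor+n-1}{n-1}=\frac{n^{n-1}H^{n-1}}{(n-1)!}+O(H^{n-2})$. But the lower-bound half of your sandwich rests on a false containment, and this is a genuine gap. Write $N=n!-1$ and eliminate $a_{n-1}=N-\sum_{i=0}^{n-2}a_i$. The discarded coordinate must itself satisfy $0\le a_{n-1}\le H$, which imposes a \emph{two-sided} constraint on the remaining ones, so the feasible set is the slab
\begin{align}
F=\Big\{x\in\{0,1,\ldots,H\}^{n-1}:\ N-H\le \textstyle\sum_{i=0}^{n-2}x_i\le N\Big\},\nonumber
\end{align}
which does not contain a neighborhood of the origin. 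Your inner simplex $\{x\ge0,\ \sum_i x_i\le H\}$ satisfies the upper constraint (when $H\le N$) but violates the lower one: the origin corresponds to $a_{n-1}=n!-1>H$ and is not admissible, and whenever $H<N/2$ \emph{every} point of your simplex has $\sum_i x_i\le H<N-H$, so the simplex is disjoint from $F$. (One checks that $\{x\ge0,\ \sum_i x_i\le H\}\subseteq F$ forces $H=N$ exactly.) Concretely, for $n=3$, $H=2$: your simplex contains $6$ lattice points, yet $\mathcal{N}(2)=3$ (the rearrangements of $(1,2,2)$), and none of your $6$ points is admissible. So taking $t=1$ does not produce the stated lower bound; a correct argument must engage the slab geometry. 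For comparison, the paper's own proof is a stars-and-bars/generating-function count of representations of the two integers $\lfloor H\rfloor-1$ and $\lfloor Hn\rfloor+1$ --- morally the same sandwich as yours, but performed on the target sum rather than on regions.

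On the final clause, you are right that $\mathcal{N}(H)=(1+o(1))cH^{n-1}$ does not follow from the two-sided bound, and right to treat it as the crux; but the repair you propose cannot succeed. The inclusion--exclusion expression $\sum_k(-1)^k\binom{n}{k}\binom{N-k(H+1)+n-1}{n-1}$ is a \emph{piecewise} polynomial in $H$: which terms are present depends on the ratio $\lambda=N/H$, and its leading coefficient is not a constant but (up to the $O(H^{n-2})$ error) the volume of the slab $\{y\in[0,1]^{n-1}:\ \lambda-1\le\sum_i y_i\le\lambda\}$, which varies with $H$ --- your own Davenport/Ehrhart first paragraph already points to this, since the section volume depends on where the hyperplane cuts the cube. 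Worse, because the coefficient sum is pinned at $N=n!-1$, the count $\mathcal{N}(H)$ is literally constant once $H\ge N$, so no asymptotic of the form $cH^{n-1}$ as $H\to\infty$ can hold for fixed $n$; the statement only has content in the bounded range $H\le n!$ that the paper itself imposes, where ``$o(1)$ as $H\to\infty$'' is meaningless. Be aware that the paper's proof does not establish this clause either (it is asserted without argument), so the honest outcome of a corrected write-up is the two-sided bound alone.
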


\begin{proof}
Consider the polynomial $x^n+a_{n-1}x^{n-1}+\cdots +a_0$, with coefficients satisfying the conditions $0\leq a_i\leq H$ and\begin{align}1+a_{n-1}+a_{n-2}+\cdots +a_0=n!.\nonumber
\end{align}We let each of this polynomials corresponds to elements of the set \begin{align}\mathcal{M}=\{(a_{0},a_1,\ldots, a_{n-1})|a_0+a_1+\cdots +a_{n-1}=n!-1,~0\leq a_{i}\leq 	H\}.\nonumber
\end{align}We remark that $\mathcal{N}(H)$ is the number of elements of the set $\mathcal{M}$. To obtain these bounds for the counting function $\mathcal{N}(H)$, we first observe that $H\leq n!$. For suppose $H>n!$, then we find that $H>n!>n!-1=a_0+a_1+a_2+\cdots a_{n-1}$. This contradicts the inequality $a_0+a_1+a_2+\cdots a_{n-1}\leq Hn$, since $n\geq 3$. For if these two inequalities hold, then we would have $Hn\leq H$, and it follows that $n\leq 1$ contradicting the inequality $n\geq 3$.  Thus the inequality \begin{align}\lfloor H\rfloor -1\leq n!-1=a_0+a_1+\ldots +a_{n-1}\leq \lfloor Hn\rfloor +1\nonumber
\end{align}is valid. The lower bound is obtained by finding the number of possible representations of the form \begin{align}a_0+a_1+\cdots +a_{n-1}=\lfloor H\rfloor-1:=K. \nonumber
\end{align}Letting $R_n(K)$ denotes the number of such different representations, then we claim that \begin{align}R_n(K)=\binom{K-1}{n-1}.\nonumber
\end{align}To see this, consider the power series\begin{align}l(z)&=\sum \limits_{K=0}^{\infty}z^{K} \nonumber
\end{align}valid in the unit disc $|z|<1$. Then it follows that \begin{align}l^{n}(z)=\sum \limits_{K=0}^{\infty}R_{n}(K)z^{K}.\nonumber
\end{align}On the other hand, we observe that \begin{align}l^n(z)&=\frac{1}{(n-1)!}\frac{d^{n-1}}{dz^{n-1}}\bigg(\frac{1}{1-z}\bigg)\nonumber \\&=\frac{1}{(n-1)!}\frac{d^{n-1}}{dz^{n-1}}\bigg(\sum \limits_{K=0}^{\infty}z^{K}\bigg)\nonumber \\&=\sum \limits_{K=n-1}^{\infty}\frac{K(K-1)\cdots (K-n+2)}{(n-1)!}z^{K-n+1}\nonumber \\&=\sum \limits_{K=n-1}^{\infty}\binom{K}{n-1}z^{K-n+1}\nonumber \\&=\sum \limits_{K=0}^{\infty}\binom{K+n-1}{n-1}z^{K}.\nonumber
\end{align}By comparism and using the fact that $R_n(K)=R_n(K-n)$, the claimed lower bound follows immediately. The upper bound follows by finding the number of different representations of the form \begin{align}a_0+a_1+\cdots +a_{n-1}=\lfloor Hn\rfloor +1,\nonumber
\end{align}by adapting the same argument, and the proof of the theorem is complete. 
\end{proof}

\begin{remark}
The above result does gives us an order of growth of monic admissible polynomials with carefully controlled coefficients. The next result highlights this very fact.
\end{remark}

\begin{corollary}\label{admissible 2}
Let $\mathcal{N}(H)$ denotes the number of monic admissible polynomials of degree $n$ in $\mathbb{Z}[x]$, with coefficients satisfying $0\leq a_{i}\leq H$ for $i=0, 1,\ldots, n-1$, then \begin{align}\mathcal{N}(H)\asymp H^{n-1}. \nonumber
\end{align}
\end{corollary}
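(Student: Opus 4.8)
The plan is to derive this directly from Theorem \ref{admissible 1}, since the corollary is essentially a repackaging of the two-sided estimate established there into the language of the $\asymp$ relation. Recall that by the notational conventions of Section 2, the statement $\mathcal{N}(H)\asymp H^{n-1}$ means precisely that there exist constants $c_1,c_2>0$ with $c_1H^{n-1}\leq \mathcal{N}(H)\leq c_2H^{n-1}$ for all sufficiently large $H$. So the entire task reduces to extracting such absolute constants from the bounds
\begin{align}
\frac{H^{n-1}}{(n-1)!}+O(H^{n-2})\leq \mathcal{N}(H) \leq \frac{n^{n-1}H^{n-1}}{(n-1)!}+O(H^{n-2}).\nonumber
\end{align}

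First I would handle the lower bound. Since $n$ is fixed and $H^{n-2}=o(H^{n-1})$ as $H\longrightarrow\infty$, the error term $O(H^{n-2})$ is eventually dominated by the main term $\tfrac{H^{n-1}}{(n-1)!}$. Concretely, for all $H$ large enough the absolute value of the error is at most $\tfrac{1}{2}\cdot\tfrac{H^{n-1}}{(n-1)!}$, whence $\mathcal{N}(H)\geq \tfrac{1}{2(n-1)!}H^{n-1}$, giving an admissible choice $c_1=\tfrac{1}{2(n-1)!}$. The upper bound is treated symmetrically: for $H$ sufficiently large the term $O(H^{n-2})$ is bounded by $\tfrac{n^{n-1}H^{n-1}}{(n-1)!}$, so that $\mathcal{N}(H)\leq \tfrac{2n^{n-1}}{(n-1)!}H^{n-1}$, and one may take $c_2=\tfrac{2n^{n-1}}{(n-1)!}$. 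With these two constants in hand the defining inequalities for $\asymp$ hold, completing the argument.

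There is no real obstacle here, as the corollary is purely a corollary: all the substantive counting work was carried out in the proof of Theorem \ref{admissible 1}, and what remains is only the routine absorption of the lower-order $O(H^{n-2})$ terms into the implied constants, which is legitimate precisely because $\asymp$ is an asymptotic relation that need only hold for sufficiently large $H$. The only point worth stating explicitly is that $n\geq 3$ is fixed throughout, so that $c_1$ and $c_2$ depend only on $n$ and not on $H$, which is exactly what the notation requires.
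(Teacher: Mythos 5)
Your proposal is correct and matches the paper's approach: the paper's entire proof is the single line ``The result follows from Theorem \ref{admissible 1},'' and your argument simply makes explicit the routine absorption of the $O(H^{n-2})$ error terms into the constants $c_1$ and $c_2$ that this deduction implicitly requires.
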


\begin{proof}
The result follows from Theorem \ref{admissible 1}.
\end{proof}
\bigskip
We recall that there are $H^{n}$ monic polynomials of degree $n$ with coefficients satisfying $0\leq a_{i}\leq H$. Corollary \ref{admissible 2} also indicates that the number of admissible monic polynomials of degree $n$ with carefully controlled coefficients as before is of the order $H^{n-1}$. Thus when a polynomial is chosen  at random, with coefficients controlled by the quantity $H$, the probability that it is admissible or the proportion that is admissible must be roughly $\frac{c}{H}$, where $c=c(n)$. We state the next result, which gives us a lower bound for the number monic irreducible admissible polynomials, with carefully controlled coefficients.
\bigskip

\begin{theorem}
Let $\mathcal{A}(H)$ denotes the number of monic admissible irreducible polynomials of degree $n\geq 3$ in the ring $\mathbb{Z}[x]$, with coefficients satisfying the relation $0\leq a_i\leq H$ for $i=0, 1,\ldots n-1$ and a fixed $H$. Then \begin{align}\mathcal{A}(H)\geq \frac{H^{n-1}}{(n-1)!}+O\bigg( H^{n-4/3}(\log H)^{2/3}\bigg)\nonumber 
\end{align}for $H\leq n!$, and where the implied constant depends on $n$.
\end{theorem}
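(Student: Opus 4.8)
The plan is to estimate $\mathcal{A}(H)$ from below by removing the reducible polynomials from the admissible family, and to bound the reducible ones by the Tur\'an sieve of Theorem \ref{Turan}. The starting observation is that a monic admissible polynomial is precisely one with $f(1)=n!$, and that if a monic $f\in\mathbb{Z}[x]$ factors over $\mathbb{Z}$ then (by Gauss's lemma) its reduction $\bar f\in\mathbb{F}_p[x]$ factors into monic factors of the same positive degrees for every prime $p$; hence reducibility over $\mathbb{Z}$ forces reducibility modulo every $p$. Writing $\mathcal{M}$ for the admissible family of Theorem \ref{admissible 1} and, for each prime $p$, letting $\mathcal{A}_p\subseteq\mathcal{M}$ be the set of $f$ whose reduction modulo $p$ is irreducible over $\mathbb{F}_p$, every reducible admissible polynomial lies in $\mathcal{M}\setminus\bigcup_{p\mid P(z)}\mathcal{A}_p=S(\mathcal{M},\rho,z)$. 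Therefore $\mathcal{A}(H)\geq\mathcal{N}(H)-S(\mathcal{M},\rho,z)$ for every $z$, and it suffices to prove $S(\mathcal{M},\rho,z)\ll H^{n-4/3}(\log H)^{2/3}$ for a suitable $z$.

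To apply Theorem \ref{Turan} I take $\delta_p$ to be the proportion of admissible residue vectors modulo $p$ — those $(b_0,\dots,b_{n-1})\in\mathbb{F}_p^n$ with $\sum_i b_i\equiv n!-1\pmod p$, of which there are $p^{n-1}$ — whose associated monic polynomial is irreducible over $\mathbb{F}_p$. For $p\leq n$ we have $n!\equiv 0$, so $\bar f(1)\equiv 0$ and $x-1\mid\bar f$, forcing $\delta_p=0$; these finitely many primes are simply discarded. For $p>n$, Theorem \ref{irreducible} together with the equidistribution of the value $f(1)$ among irreducible polynomials gives $\delta_p\asymp 1/n$, and in particular $0\leq\delta_p<1$. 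Chebyshev's bound (Theorem \ref{Chebychev}) then yields $U(z)=\sum_{p\mid P(z)}\delta_p\asymp\pi(z)/n\asymp z/(n\log z)$, so the leading Tur\'an term is $|\mathcal{M}|/U(z)\asymp nH^{n-1}(\log z)/z$, where $|\mathcal{M}|=\mathcal{N}(H)\asymp H^{n-1}$ by Corollary \ref{admissible 2}.

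The heart of the argument is the remainder estimation. Writing $|\mathcal{A}_p|=\sum_{\mathbf b}N(\mathbf b)$, summed over the admissible irreducible classes $\mathbf b$, where $N(\mathbf b)=\#\{\mathbf a\in\mathcal{M}:\mathbf a\equiv\mathbf b\pmod p\}$, I substitute $a_i=b_i+pc_i$ to present $N(\mathbf b)$ as the number of lattice points of an $(n-1)$-dimensional simplex of side $\asymp H/p$, obtaining $N(\mathbf b)=|\mathcal{M}|/p^{n-1}+O\big((H/p)^{n-2}\big)$. Since there are $\asymp p^{n-1}/n$ admissible irreducible classes, summing the discrepancies gives $|R_p|\ll pH^{n-2}$, and the identical analysis modulo $pq$ — using the Chinese Remainder Theorem and the resulting multiplicativity $\delta_{pq}=\delta_p\delta_q$ — gives $|R_{p,q}|\ll pq\,H^{n-2}$. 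With $\sum_{p<z}p\asymp z^2/\log z$, Theorem \ref{Turan} then bounds the linear remainder by $\ll nH^{n-2}z\asymp H^{n-5/3}(\log H)^{1/3}$ and the quadratic remainder by $\ll n^2H^{n-2}z^2\asymp H^{n-2}z^2$.

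Finally I optimise in $z$. Balancing the leading term $H^{n-1}(\log z)/z$ against the quadratic remainder $H^{n-2}z^2$ gives $z^3\asymp H\log z$, i.e. the choice $z\asymp H^{1/3}(\log H)^{1/3}$, for which both are $\asymp H^{n-4/3}(\log H)^{2/3}$ while the linear remainder is of strictly smaller order. This establishes $S(\mathcal{M},\rho,z)\ll H^{n-4/3}(\log H)^{2/3}$, and combining with $\mathcal{N}(H)\geq H^{n-1}/(n-1)!+O(H^{n-2})$ from Theorem \ref{admissible 1} yields the claim, since $H^{n-4/3}(\log H)^{2/3}$ dominates $H^{n-2}$. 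I expect the main obstacle to be the remainder analysis: one must control the lattice-point discrepancy $N(\mathbf b)-|\mathcal{M}|/p^{n-1}$ uniformly in $\mathbf b$ and $p\leq z$, and — more delicately — justify $\delta_p\asymp 1/n$, which requires that $f(1)$ be equidistributed over the nonzero classes as $f$ ranges over irreducible polynomials of degree $n$, a statement going beyond the bare Theorem \ref{irreducible} and calling for Weil-type character-sum estimates. The two-variable remainder $R_{p,q}$ is where the dominant error, and hence the final exponent, is pinned down.
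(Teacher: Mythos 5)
Your proposal is correct in substance and follows the same overall strategy as the paper: detect irreducibility over $\mathbb{Z}$ by irreducibility modulo some prime $p<z$ (via Gauss's lemma, monic reducible polynomials stay reducible mod every $p$), bound the sifted set containing all reducible admissible polynomials by Tur\'an's sieve (Theorem \ref{Turan}), choose $z=H^{1/3}(\log H)^{1/3}$, and combine with the lower bound of Theorem \ref{admissible 1}. Where you differ is in the execution of the sieve input, and your version is actually more careful than the paper's. The paper treats the admissible polynomials as if they were equidistributed over all $p^{n}$ residue classes of monic polynomials mod $p$, assigning each class $\approx cH^{n-1}/p^{n}$ elements and multiplying by the count of Theorem \ref{irreducible}; this ignores the fact that admissibility forces $a_0+\cdots+a_{n-1}\equiv n!-1 \pmod p$, so that only $p^{n-1}$ classes are hit at all. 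You work directly on this hyperplane and define $\delta_p$ as the exact proportion of compatible classes that are irreducible, which makes $R_p$ a pure lattice-point discrepancy $\ll pH^{n-2}$ and eliminates the paper's $H^{n-1}/p^{n/2}$ terms entirely. You also catch two points the paper misses: first, for $p\leq n$ one has $n!\equiv 0\pmod p$, hence $(x-1)\mid \bar{f}$ and $\delta_p=0$, so these primes must be discarded (the paper's formula $|\mathcal{R}_p|\approx cH^{n-1}/n$ is simply false for them); second, the assertion $\delta_p\asymp 1/n$ for $p>n$ does not follow from Theorem \ref{irreducible} alone but needs equidistribution of the value $f(1)$ over nonzero residues as $f$ ranges over monic irreducibles of degree $n$ --- a function-field Dirichlet/Weil-type statement, which the paper assumes silently by multiplying a density by a count as though the two events were independent. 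You leave that ingredient as a citation rather than proving it, which is legitimate: it is standard that the number of monic irreducibles of degree $n$ over $\mathbb{F}_p$ with $f(1)=c\neq 0$ is $\frac{1}{p-1}\cdot\frac{p^n}{n}+O(p^{n/2})$. In short, your route buys rigor: it is the paper's argument with its actual gaps identified and repaired, at the cost of invoking one extra (standard) character-sum estimate that the paper also needs but never names.
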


\footnote{
\par
.}%
.
\begin{proof}
We adopt the traditional technique by way of estimating first the number of monic admissible irreducible polynomials modulo a prime $p$. By letting $\mathcal{N}(H)$ denotes the number of monic admissible polynomials in $\mathbb{Z}[x]$, we find  by Theorem \ref{admissible 1} that \begin{align}\mathcal{N}(H)&\geq \binom{\lfloor H\rfloor-2}{n-1}\nonumber \\&=\frac{H^{n-1}}{(n-1)!}+O\bigg(H^{n-2}\bigg).
\end{align}In particular, by Theorem \ref{admissible 1}, we find that $\mathcal{N}(H)=cH^{n-1}+O(H^{n-2})$, for some $c:=c(n)>0$. Let $z=z(H)$ be a real number to be chosen later and consider the polynomial \begin{align}g(x):=x^{n}+a_{n-1}x^{n-1}+\cdots +a_0 \in \mathbb{Z}[x]
\end{align}satisfying the condition $1+a_{n-1}+\cdots +a_0=n!$, for each $0\leq a_i\leq H$. We let each of these polynomials corresponds to an element of the set\begin{align}\mathcal{R}=\left\{(a_{n-1},a_{n-2},\ldots, a_{0})|a_{n-1}+a_{n-2}+\cdots a_{0}=n!-1, \quad 0\leq a_{i}\leq H\right\}.\nonumber
\end{align} Let $\mathcal{R}_{p}$ be a set of monic polynomials whose elements corresponds to polynomials in $\mathcal{R}$ whose elements are irreducible modulo $p$. Now we remark that if a polynomial is irreducible in $\mathcal{R}_{p}$ for some prime $p$, then it is irreducible in $\mathcal{R}$. We observe that the number of polynomials in $\mathcal{R}$ that corresponds to each polynomial $g(x)\pmod p$ in $\mathcal{R}_p$ is given by \begin{align}\frac{c(1+o(1))}{H}\bigg(\frac{H}{p}+O(1)\bigg)^{n},\nonumber
\end{align}where $c=c(n)$. Letting $z^2< H$, we can write \begin{align}\frac{c(1+o(1))}{H}\bigg(\frac{H}{p}+O(1)\bigg)^{n}&=\frac{(1+o(1))cH^{n-1}}{p^n}+O\bigg(\frac{H^{n-2}}{p^{n-1}}\bigg).\nonumber
\end{align}Appealing to Theorem \ref{irreducible}, we find that the number of polynomials in $\mathcal{R}$ that correspond to polynomials in $\mathcal{R}_p$ is given by \begin{align}|\mathcal{R}_{p}|&=\bigg(\frac{(1+o(1))cH^{n-1}}{p^n}+O\bigg(\frac{H^{n-2}}{p^{n-1}}\bigg)\bigg)\bigg(\frac{p^n}{n}+O(p^{n/2})\bigg)\nonumber \\&=\frac{(1+o(1))cH^{n-1}}{n}+O\bigg(\frac{H^{n-1}}{p^{n/2}}\bigg)+O(H^{n-2}p). \nonumber
\end{align}We see in relation to Theorem \ref{Turan}, that  \begin{align}\delta_{p}=\frac{1}{n},\quad R_p=\frac{H^{n-1}}{p^{n/2}}+H^{n-2}p,\nonumber 
\end{align} and \begin{align}R_{p,q}=\frac{H^{n-1}}{p^{n/2}}+\frac{H^{n-1}}{q^{n/2}}+H^{n-2}pq,
\end{align}so that by appealing to Theorem \ref{Chebychev}, we find that \begin{align}U(z)=\sum \limits_{p|P(z)}\delta_p=\sum \limits_{p|P(z)}\frac{1}{n}\gg \frac{z}{\log z}.\nonumber 
\end{align}We find that \begin{align}\frac{|\mathcal{R}|}{U(z)}&\ll \frac{H^{n-1}\log z}{z}.
\end{align}Again \begin{align}\frac{2}{U(z)}\sum \limits_{p|P(z)}|R_{p}|&=\frac{2}{U(z)}\sum \limits_{p|P(z)}\bigg(\frac{H^{n-1}}{p^{n/2}}+H^{n-2}p\bigg)\nonumber \\& \ll \frac{H^{n-1}\log z}{z}\sum \limits_{p|P(z)}\frac{1}{p^{n/2}}+\frac{H^{n-2}\log z}{z}\sum \limits_{p|P(z)}p\nonumber \\&\ll \frac{H^{n-1}\log z}{z}+H^{n-2}z\nonumber.\label{1}
\end{align}Similarly we find that \begin{align}\frac{1}{U^2(z)}\sum \limits_{\substack{p|P(z)\\q|P(z)}}|R_{p,q}|&=\frac{\log ^2 z}{z^2}\sum \limits_{\substack{p|P(z)\\q|P(z)}}\bigg(\frac{H^{n-1}}{p^{n/2}}+\frac{H^{n-1}}{q^{n/2}}+H^{n-2}pq\bigg)\nonumber \\&=\frac{H^{n-1}\log^2 z}{z^2}\sum \limits_{\substack{p|P(z)\\q|P(z)}}\frac{1}{p^{n/2}}+\frac{H^{n-1}\log^2 z}{z^2}\sum \limits_{\substack{p|P(z)\\q|P(z)}}\frac{1}{q^{n/2}}\nonumber \\&+\frac{H^{n-2}\log^2 z}{z^2}\sum \limits_{\substack{p|P(z)\\q|P(z)}}pq. \nonumber
\end{align}Thus, we find that \begin{align}\frac{1}{U^2(z)}\sum \limits_{\substack{p|P(z)\\q|P(z)}}R_{p,q}&\ll \frac{H^{n-1}\log z}{z}+H^{n-2}z^2,\nonumber
\end{align}where the implied constant depends on $n$. It follows, by Theorem \ref{Turan} that \begin{align}S(\mathcal{R}, \rho, z)&\ll \frac{H^{n-1}\log z}{z}+H^{n-2}z^2.\nonumber
\end{align}By choosing $z:=H^{1/3}(\log H)^{1/3}$, it follows that \begin{align}S(\mathcal{R},\rho, z)&=O\bigg(H^{n-4/3}(\log H)^{2/3}\bigg),\nonumber
\end{align}and the result follows immediately.
\end{proof}

\section{Final remarks}
In this paper we have been able to quantify at the very least the number of admissible polynomials in the ring $\mathbb{Z}[x]$; in particular, we have shown that the number of admissible polynomials with coefficients controlled by the quantity $H$ is of the order $\asymp H^{n-1}$, and that the number of monic reducible admissible polynomials is of the order\begin{align}\ll H^{n-4/3}(\log H)^{2/3}. \nonumber
\end{align}Aside making an improvement to the following quantitative bounds, admissible polynomials has a property that could be usefull in other areas of research, most especially in the area of cryptography.




\bibliographystyle{amsplain}

\end{document}